\newcommand{\bburl}[1]{\textcolor{blue}{\url{#1}}}
\newcommand{\be}{\begin{equation}}
\newcommand{\ee}{\end{equation}}
\newcommand{\bea}{\begin{eqnarray}}
\newcommand{\eea}{\end{eqnarray}}
\newtheorem{thm}{Theorem}[section]
\newtheorem{cor}[thm]{Corollary}
\newtheorem{lem}[thm]{Lemma}
\numberwithin{equation}{section}
\newcommand{\Mod}[1]{\ \mathrm{mod}\ #1}
\begin{document}

\title{On Arithmetic Progressions of Powers\\ in Cyclotomic Polynomials}

\author{H\`ung Vi\d{\^e}t Chu}
\email{\textcolor{blue}{\href{mailto:hungchu2@illinois.edu}{hungchu2@illinois.edu}}}
\address{Department of Mathematics, University of Illinois at Urbana-Champaign, Urbana, IL 61820, USA}

\subjclass[2010]{}

\keywords{}

\thanks{The author wishes to thank the referees and the editor for useful comments that improved this paper.}

\date{\today}
\maketitle

\begin{abstract}
We determine necessary conditions for when powers corresponding to positive/negative coefficients of $\Phi_{n}$ are in arithmetic progression. When $n = pq$ for any primes $q>p>2$, our conditions are also sufficient. Finally, we generalize the result when $n = pq$ to the so-called inclusion-exclusion polynomials first introduced by Bachman. 
\end{abstract}

\section{Introduction and main results.}

For integers $n\ge 1$, the $n$th cyclotomic polynomial is defined as 
$$\Phi_n(X) \ =\ \prod_{m=1, (m,n)=1}^n (X-e^{\frac{2\pi mi}{n}}).$$
It is well known that $\Phi_n$ is in $\mathbb{Z}[X]$ with degree $\phi(n)$, where $\phi$ is the Euler totient function. In the study of cyclotomic polynomials, we can reduce our enquiry to the case when $n$ is odd, square-free, and composite by \cite[Remark 2.2]{Th}. Much work has been done to characterize $\Phi_n$ (see \cite{Ba, Bm, Dr, Th}), and many nice results are achieved when $n$ has a small number of prime divisors (see \cite{Be1, Be2, LL}).  In particular, we know an explicit formula for $\Phi_{pq}$:
\begin{align}\label{LLfor}\Phi_{pq}(X) \ =\ \left(\sum_{i=0}^r X^{ip}\right)\left(\sum_{j=0}^sX^{jq}\right) - \left(\sum_{i=r+1}^{q-1} X^{ip}\right)\left(\sum_{j=s+1}^{p-1}X^{jq}\right)X^{-pq},\end{align}
where $r, s$ are nonnegative and $pr+qs = (p-1)(q-1)$. For its derivation, see \cite{LL}. Clearly, $r$ and $s$, when $0<r<q$, are uniquely determined as follows:
\begin{align}
    \label{d1} pr &\ \equiv\ (p-1)(q-1) \Mod q\\
    \label{d2} s &\ =\ ((p-1)(q-1)-pr)/q.
\end{align}
If we expand the products in \eqref{LLfor}, the resulting monomial terms are all different \cite{LL}. 

Our first main result shows necessary conditions when powers of $X$  are in arithmetic progression. Two examples are
\begin{align*}
\Phi_{21}(X) &\ =\ X^{12}-X^{11}+X^9 - X^8 + X^6 - X^4+X^3 - X+1,\\
\Phi_{33}(X) &\ =\ X^{20} - X^{19} + X^{17} - X^{16} + X^{14} - X^{13} +\cdots - X^4 + X^3 - X +1.
\end{align*}
Observe that powers corresponding to positive coefficients of $\Phi_{21}(X)$ are in arithmetic progression, and powers corresponding to negative coefficients of $\Phi_{33}(X)$ are in arithmetic progression. Our theorems provide necessary conditions for when these arithmetic progressions appear. Let $c_{n,k}$ be the coefficient of $X^k$ and define $$S^+_{n} \ :=\ \{k\,:\, c_{n,k} > 0\}\mbox{ and }S^-_{n} \ :=\ \{k\,:\, c_{n,k} < 0\}.$$
\begin{thm}\label{S+}
Let $n$ be an odd, square-free, composite number. Write $n = p_1p_2\cdots p_t$, where $p_1<p_2<\cdots<p_t$. Then the following hold.
\begin{enumerate}
    \item[(i)] If $t$ is odd, then $S_n^+$ is not in arithmetic progression.
    \item[(ii)] If $t$ is even and $S_n^+$ is in arithmetic progression, then $p_2  \equiv 1\Mod p_1$.
\end{enumerate}
\end{thm}
\begin{thm}\label{S-}
Let $n$ be an odd, square-free, composite number. Write $n = p_1p_2\cdots p_t$, where $p_1<p_2<\cdots<p_t$. Then the following hold. 
\begin{enumerate}
    \item [(i)] If $t$ is odd, then $S_n^-$ is not in arithmetic progression.
    \item [(ii)] If $t$ is even and $S_n^-$ is in arithmetic progression, then $p_2 \equiv -1\Mod p_1$.
\end{enumerate}
\end{thm}
We have the following two corollaries. 
\begin{cor}
Let $n$ be an odd, square-free, composite number. Then $S_n^-$ and $S_n^+$ are not simultaneously in arithmetic progression. 
\end{cor}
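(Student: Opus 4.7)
The plan is to deduce the corollary directly from Theorems \ref{S+} and \ref{S-} by splitting on the parity of $t$, where $n = p_1 p_2 \cdots p_t$ with $p_1 < p_2 < \cdots < p_t$ odd primes. Since $n$ is composite, $t \ge 2$, and in particular $p_1 \ge 3$.

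First I would handle the case $t$ odd. By Theorem \ref{S+}(i), $S_n^+$ is not in arithmetic progression, so $S_n^+$ and $S_n^-$ cannot both be in arithmetic progression. (One could alternatively invoke Theorem \ref{S-}(i), whose conclusion $t = 2$ contradicts $t$ odd, but only one of the two is needed.)

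Next I would handle the case $t$ even. Assume for contradiction that both $S_n^+$ and $S_n^-$ are in arithmetic progression. Applying Theorem \ref{S+}(ii) yields $p_2 = m p_1 + 1$ for some $m \in \mathbb{N}$, while applying Theorem \ref{S-}(ii) yields $p_2 = m' p_1 - 1$ for some $m' \in \mathbb{N}$. Subtracting gives $(m' - m) p_1 = 2$, so $p_1 \mid 2$, contradicting $p_1 \ge 3$. Combining both cases completes the proof.

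Since both theorems are assumed, there is essentially no obstacle; the argument is a short modular-arithmetic observation on $p_1$. The only minor thing to be careful about is confirming that $p_1 \ge 3$ (which follows from $n$ being odd and composite), so that the equation $(m' - m)p_1 = 2$ truly gives a contradiction.
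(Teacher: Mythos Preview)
Your proof is correct and follows essentially the same approach as the paper: split on the parity of $t$, use Theorem~\ref{S+}(i) for $t$ odd, and for $t$ even combine the two congruence conditions from Theorems~\ref{S+}(ii) and~\ref{S-}(ii) to force $p_1 \mid 2$. The only cosmetic difference is that the paper adds the two expressions for $p_2$ (getting $2p_2 = (m_1+m_2)p_1$) whereas you subtract them (getting $(m'-m)p_1 = 2$), but the contradiction is the same.
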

\begin{proof} Write $n = p_1p_2\cdots p_t$, where $p_1<p_2<\cdots<p_t$. If $t$ is odd, Theorem \ref{S+} says that $S_n^+$ is not in arithmetic progression. Suppose that $t$ is even and that both $S^+_{n}$ and $S^-_{n}$ are in arithmetic progression. By items (ii) of Theorem \ref{S+} and Theorem \ref{S-}, $p_2 = m_1p_1+1 = m_2p_1-1$ for some $m_1, m_2\in\mathbb{N}$. Hence, $2p_2 = (m_1+m_2)p_1$, which implies that either $p_1 = 2$ or $p_1$ divides $p_2$. Both cases are impossible. 
\end{proof}
\begin{cor}\label{twoprimes}
Let $2<p<q$ be primes. Then $S^+_{pq}$ forms an arithmetic progression if and only if $q = mp+1$ for some $m\in\mathbb{N}$, and $S^-_{pq}$ forms an arithmetic progression if and only if $q = mp-1$ for some $m\in\mathbb{N}$. 
\end{cor}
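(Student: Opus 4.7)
The necessary directions are free: for $n=pq$ we have $t=2$ (even) with $p_1=p$, $p_2=q$, so Theorem~\ref{S+}(ii) gives $q=mp+1$ whenever $S_{pq}^+$ is an AP, and Theorem~\ref{S-}(ii) gives $q=mp-1$ whenever $S_{pq}^-$ is an AP. The only work is the two sufficiency directions, which I would handle by direct computation using the Lam--Leung formula \eqref{LLfor}, relying on the fact (also from \cite{LL}) that after expansion no two monomials in \eqref{LLfor} coincide, so $S_{pq}^+$ is exactly the exponent set of the first product and $S_{pq}^-$ that of the second.

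For part (i) with $q=mp+1$, I would first solve \eqref{d1}--\eqref{d2}. Writing $(p-1)(q-1)=(p-1)mp$ and $pr+qs=(p-1)mp$, reducing modulo $p$ gives $s\equiv 0\pmod p$, hence $s=0$ since $0\le s\le p-1$, and then $r=(p-1)m$. Plugging into \eqref{LLfor}, the positive part collapses to $\sum_{i=0}^{(p-1)m}X^{ip}$, so $S_{pq}^+=\{0,p,2p,\ldots,(p-1)mp\}$, an AP with common difference $p$.

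For part (ii) with $q=mp-1$, the analogous computation gives $s=p-2$ and $r=m-1$. The negative part of \eqref{LLfor} then reduces to a single $j=p-1$ factor times $\sum_{i=m}^{mp-2}X^{ip}\cdot X^{-q}$, so the negative exponents are
\[
ip+(p-1)q-pq \;=\; ip-q \;=\; (i-m)p+1, \qquad m\le i\le mp-2,
\]
giving $S_{pq}^- = \{1,\,p+1,\,2p+1,\,\ldots,\,(m(p-1)-2)p+1\}$, again an AP with common difference $p$.

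The ``main obstacle'' is almost absent; the only subtle point is guaranteeing that every element of the first (resp.\ second) product in \eqref{LLfor} actually survives as a positive (resp.\ negative) coefficient of $\Phi_{pq}$, i.e.\ that there is no cross-cancellation between the two products. I would invoke the distinctness of all monomial terms in the expansion of \eqref{LLfor} established by Lam--Leung, which makes $S_{pq}^\pm$ coincide exactly with the exponent sets read off above.
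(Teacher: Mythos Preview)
Your proposal is correct and follows essentially the same route as the paper: the forward implications are deduced from Theorems~\ref{S+}(ii) and~\ref{S-}(ii), and the backward implications are handled by solving \eqref{d1}--\eqref{d2} to get $(r,s)=((p-1)m,0)$ when $q=mp+1$ and $(r,s)=(m-1,p-2)$ when $q=mp-1$, then reading off the exponent sets from \eqref{LLfor}. Your write-up is in fact more explicit than the paper's, and your remark that the distinctness of monomials in \eqref{LLfor} (from \cite{LL}) is what guarantees $S_{pq}^\pm$ coincides with the naive exponent sets is exactly the point the paper leaves implicit.
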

\begin{proof}
Due to similarity, we only prove the result for $S^+_{pq}$. The forward implication follows directly from Theorem \ref{S+} item (ii). For the backward implication, we use formula \eqref{LLfor}. Suppose that $q = mp + 1$ for some $m\in\mathbb{N}$. Then formulas \eqref{d1} and \eqref{d2} give $r = m(p-1)$ and $s = 0$. Combined with formula \eqref{LLfor}, this clearly indicates that $S^+_{pq}$ is in arithmetic progression of difference $p$. 
\end{proof}

Finally, we generalize Corollary \ref{twoprimes} to a family of inclusion-exclusion polynomials introduced by Bachman \cite{Ba1}. An inclusion-exclusion polynomial is defined as 
$$P_{a,b}(X)\ =\ \frac{(X-1)(X^{ab}-1)}{(X^a-1)(X^b-1)},$$
where $a, b$ are relatively prime natural numbers; $P_{a,b}$ can also be interpreted as the semigroup polynomial of the numerical semigroup generated by $a$ and $b$ \cite{Mor}. When $a$ and $b$ are odd primes, $P_{a,b}(X) = \Phi_{ab}(X)$. 
\begin{thm}\label{i/e}
Let $1<a<b$ be coprime natural numbers. 
Then $P_{a,b}(X)$ is a polynomial. Furthermore, the exponents of the monomials with positive coefficient are in arithmetic progression if and only if $b\equiv 1\Mod a$. The exponents of the monomials with negative coefficient are in arithmetic progression if and only if $b\equiv -1\Mod a$. 
\end{thm}

\section{Proofs of Theorems \ref{S+} and \ref{S-}.}
We modify a powerful technique, which was used by Schur \cite{Leh} to prove there exist cyclotomic polynomials with coefficients arbitrarily large in absolute value. The following lemma is the key ingredient.
\begin{lem}\label{key}
Let $n$ be an odd, square-free, composite number. Write $n = p_1p_2\cdots p_t$. Then modulo $X^{p_2+2}$,
$$\Phi_n(X)\ \equiv\ \begin{cases}\sum_{i=0}^{p_1-1} X^i - X^{p_2} - X^{p_2+1}, &\mbox{ if } t\mbox{ is odd};\\
\sum_{i=0}^\infty X^{ip_1} - \sum_{i=0}^\infty X^{ip_1+1}+X^{p_2} - X^{p_2+1}, &\mbox{ if }t\mbox{ is even}.\end{cases}$$
\end{lem}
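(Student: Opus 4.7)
The plan is to apply the Möbius-inversion product
\[
\Phi_n(X) \;=\; \prod_{d\mid n}(X^d-1)^{\mu(n/d)}
\]
and exploit the fact that $n$ is squarefree, so that every exponent $\mu(n/d)$ is $\pm 1$. The first step is to separate the divisors $d\mid n$ into ``small'' ones ($d<p_2+2$) and ``large'' ones ($d\ge p_2+2$). The small divisors are precisely $1$, $p_1$, and $p_2$: any other divisor is either a prime $\ge p_3\ge p_2+2$ (both being odd primes greater than $p_2$), or a product of at least two primes from $\{p_1,\dots,p_t\}$, hence at least $p_1p_2\ge 3p_2>p_2+1$.

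Next I would argue that each large factor collapses to $-1$ modulo $X^{p_2+2}$. Indeed, $d\ge p_2+2$ forces $X^d\equiv 0$, so $X^d-1\equiv -1$, and both $(-1)^{+1}$ and $(-1)^{-1}$ equal $-1$. There are $2^t-3$ large divisors, and this number is odd for every $t\ge 2$, so the large factors amalgamate into a single global factor of $-1$. This reduces the lemma to the congruence
\[
\Phi_n(X) \;\equiv\; -(X-1)^{\mu(n)}(X^{p_1}-1)^{\mu(n/p_1)}(X^{p_2}-1)^{\mu(n/p_2)} \pmod{X^{p_2+2}},
\]
in which $\mu(n)=(-1)^{t}$ and $\mu(n/p_1)=\mu(n/p_2)=(-1)^{t-1}$.

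It remains to split on the parity of $t$. For $t$ odd the signs turn the right-hand side into $(1+X+\cdots+X^{p_1-1})(1-X^{p_2})$ via the polynomial identity $(X^{p_1}-1)/(X-1)=1+X+\cdots+X^{p_1-1}$; expanding and truncating (with $p_1\ge 3$ guaranteeing that both $X^{p_2}$ and $X^{p_2+1}$ survive while $X^{p_2+2}$ and higher are killed) yields the stated formula. For $t$ even the expression becomes $-(X-1)(X^{p_1}-1)^{-1}(X^{p_2}-1)^{-1}$; here I would expand the two inverses as truncated geometric series, noting that $(X^{p_2}-1)^{-1}\equiv -(1+X^{p_2})$ modulo $X^{p_2+2}$ since $X^{2p_2}\equiv 0$, and then multiply out $(1-X)\bigl(\sum_{i\ge 0}X^{ip_1}\bigr)(1+X^{p_2})$, discarding terms of degree $\ge p_2+2$. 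The cross term $X^{p_1+p_2}$ conveniently vanishes because $p_1+p_2\ge p_2+3$.

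The main obstacle is really just the parity count merging all $2^t-3$ large factors into a single sign together with a careful accounting of which monomials survive the truncation; once these are in place, everything else is elementary symbolic manipulation, and the two cases differ only because $\mu(n/p_1),\mu(n/p_2)$ flip the cyclotomic factors between a finite polynomial expansion and a truncated geometric series.
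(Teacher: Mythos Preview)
Your argument is correct and follows essentially the same route as the paper: apply the M\"obius product formula, observe that every divisor $d>p_2$ contributes a factor $\equiv -1$ modulo $X^{p_2+2}$, and then expand the three surviving factors according to the parity of $t$. Your direct parity count of the $2^t-3$ large divisors is a slight shortcut over the paper's use of $\sum_{d\mid n}\mu(d)=0$ to evaluate the combined sign, but the overall structure is identical.
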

\begin{proof}
By \cite[Lemma 1.2]{Th}, we can write:
$\Phi_n(X) \ =\ \prod_{d|n}(X^{d}-1)^{\mu(n/d)}$,
where $\mu(n)$ denotes the Mobius function. Modulo $X^{p_2+2}$, we have
\begin{align*}\Phi_n(X) &\ \equiv\ (X-1)^{\mu(n)}(X^{p_1}-1)^{\mu(n/p_1)}(X^{p_2}-1)^{\mu(n/p_2)}\prod_{\substack{d|n\\ d>p_2}}(X^{d}-1)^{\mu(n/d)}\\
&\ \equiv\ (X-1)^{\mu(n)}(X^{p_1}-1)^{\mu(n/p_1)}(X^{p_2}-1)^{\mu(n/p_2)}(-1)^{\ell},
\end{align*}
where $\ell = \sum_{\substack{d|n\\ d>p_2}}\mu(n/d)$.

If $t$ is odd, $\mu(n) = -1$ and $\mu(n/p_1) = \mu(n/p_2) = 1$. Since it is well known that $\sum_{d|n} \mu(d) = 0$ for all $n>1$, we get $\ell = -1$. Thus, 
\begin{align*}\Phi_n(X) \ \equiv\ -\frac{(X^{p_1}-1)(X^{p_2}-1)}{X-1}&\ \equiv\ -(1+X+\cdots+X^{p_1-1})(X^{p_2}-1)\\
&\ \equiv\ 1 + X+ \cdots + X^{p_1-1} - X^{p_2} - X^{p_2+1}.
\end{align*}

If $t$ is even, we get $\mu(n) = 1$, $\mu(n/p_1) = \mu(n/p_2) = -1$ and $\ell = 1$. Therefore,
\begin{align*}
    \Phi_n(X) &\ \equiv\ -(X-1)\frac{1}{1-X^{p_1}}\frac{1}{1-X^{p_2}}\\
    &\ \equiv\ -(X-1)(1+X^{p_1}+X^{2p_1} + \cdots)(1+X^{p_2}+X^{2p_2} + \cdots)\\
    &\ \equiv\ \sum_{i=0}^\infty X^{ip_1} - \sum_{i=0}^\infty X^{ip_1+1}+X^{p_2} - X^{p_2+1}.
\end{align*}
We have finished our proof. 
\end{proof}
\begin{lem}\label{skey}
Let $n$ be an odd, square-free, composite number. Write $n = p_1p_2\cdots p_t$. Suppose $2p_1+2p_2\ge \phi(n)+2$, where $\phi$ is the Euler totient function. Then $t = 2$.
\end{lem}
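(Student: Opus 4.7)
The plan is to exploit the explicit formula $\phi(n) = \prod_{i=1}^{t}(p_i - 1)$ and split into two cases depending on the parity-free quantity $t$. Since $n$ is odd, square-free, and composite, $t \ge 2$ and each $p_i$ is an odd prime with $p_1 \ge 3$, $p_2 \ge 5$, and (if $t \ge 3$) $p_3 \ge 7$.

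I would first dispose of the case $t = 2$ directly. Here $\phi(n) = (p_1-1)(p_2-1) = p_1 p_2 - p_1 - p_2 + 1$, so the hypothesis $2p_1 + 2p_2 \ge \phi(n) + 2$ rearranges to $3(p_1 + p_2 - 1) \ge p_1 p_2$, which is precisely the desired conclusion.

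The main task is then to rule out $t \ge 3$ under the hypothesis. The key estimate is that, since $p_3 \ge 7$, we have $\prod_{i=3}^{t}(p_i - 1) \ge 6$, and therefore
\[
\phi(n) \;\ge\; 6(p_1-1)(p_2-1).
\]
It thus suffices to show that for all odd primes $p_1 \ge 3$ and $p_2 \ge 5$ one has $2p_1 + 2p_2 < 6(p_1-1)(p_2-1) + 2$, equivalently $3 p_1 p_2 > 4(p_1 + p_2 - 1)$. I would verify this by factoring
\[
3 p_1 p_2 - 4(p_1 + p_2 - 1) \;=\; p_1(3 p_2 - 4) - 4(p_2 - 1)
\]
and applying $p_1 \ge 3$ to get a lower bound of $5 p_2 - 8 \ge 17$, which is strictly positive. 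This contradicts the hypothesis for $t \ge 3$ and forces $t = 2$, at which point the previous paragraph yields the second half of the conclusion.

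The argument is essentially a routine algebraic manipulation; the only mildly subtle point is isolating the single factor $p_3 - 1 \ge 6$ from $\phi(n)$ to obtain a bound uniform in $t$ when $t \ge 3$, and making sure the resulting quadratic inequality in $p_1, p_2$ is strict enough to hold over all admissible primes. I do not anticipate any serious obstacle beyond this bookkeeping.
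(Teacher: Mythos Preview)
Your proposal is correct and follows essentially the same route as the paper: bound $\phi(n)\ge 6(p_1-1)(p_2-1)$ via $p_3-1\ge 6$ when $t\ge 3$, derive the contradictory inequality $3p_1p_2\le 4(p_1+p_2-1)$, and then read off the $t=2$ conclusion directly from the hypothesis. The only cosmetic difference is that you verify the final inequality via the factorization $p_1(3p_2-4)-4(p_2-1)\ge 5p_2-8>0$, whereas the paper checks $p_1p_2>4p_1$ and $2p_1p_2>4p_2$ separately.
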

\begin{proof}
Suppose that $t\ge 3$. Then \begin{align*}\phi(n) + 2\ \ge\ (p_1-1)(p_2-1)(p_3-1) + 2&\ \ge\ 6(p_1-1)(p_2-1)+ 2\\
&\ = \ 6p_1p_2 - 6(p_1+p_2) + 8.
\end{align*}
Hence, $2p_1+2p_2\ge \phi(n)+2$ implies that $4(p_1+p_2)\ge 3p_1p_2+4$, which is a contradiction since $p_1p_2 > 4p_1$ and $2p_1p_2>4p_2$. Therefore, $t = 2$. 
\end{proof}

We are ready to prove Theorems \ref{S+} and \ref{S-}.
\begin{proof}[Proof of Theorem \ref{S+}]
If $t$ is odd and $S_n^+$ is in arithmetic progression, then by Lemma \ref{key} it must be that $S_n^+ = \{0, 1, 2,\ldots, \phi(n)\}$. However, $X^{p_2}$ and $X^{p_2+1}$ have coefficient $-1$, a contradiction. Therefore, $S_n^+$ is not in arithmetic progression.  

If $t$ is even, Lemma \ref{key} guarantees that $0$ and $p_1$ are in $S_n^+$. Suppose that $S_n^+$ is in arithmetic progression. If $p_2\in S_n^+$, then $p_1$ divides $p_2$, a contradiction. So, $X^{p_2}$ must be cancelled out by $X^{mp_1+1}$ for some $m$. Therefore, $p_2 = mp_1+1$, as desired. 
\end{proof}

\begin{proof}[Proof of Theorem \ref{S-}]
If $t$ is odd, Lemma \ref{key} says that $p_2$ and $p_2+1$ are in $S_n^-$. Suppose that $S_n^-$ is in arithmetic progression. Then $S_n^- = \{p_2, p_2+1, \ldots, \phi(n) - p_2\}$. Thus, the number of powers with negative coefficients is exactly $\phi(n) -2p_2 + 1$. Hence, $-(\phi(n)-2p_2+1)$ is an upper bound for the sum of these coefficients. By symmetry of cyclotomic polynomials, we have $$S_n^+ = \{0, 1, 2, \ldots, p_1-1,  \phi(n)-p_1+1,\ldots, \phi(n)-1, \phi(n)\}.$$ Thus, the number of powers with positive coefficient is exactly $2p_1$. Since each coefficient is 1, the sum of them is $2p_1$. Using the fact that $\Phi_n(1) = 1$ if $n$ is not a prime power, we know that $2p_1 - (\phi(n)-2p_2 + 1)\ \ge\ 1$,
which is equivalent to $2p_1 + 2p_2 \ge \phi(n)+2$. By Lemma \ref{skey}, we have $t = 2$, which contradicts the assumption that $t$ is odd. 

If $t$ is even, Lemma \ref{key} says that $1$ and $p_1+1$ are in $S_n^-$. If $p_2 + 1$ is in $S_n^-$, $p_1$ must divide $p_2$, a contradiction. So $X^{p_2+1}$ must be cancelled by $X^{mp_1}$ for some $m\in \mathbb{N}$. Therefore, $p_2 = mp_1 - 1$. 
\end{proof}
\section{Proof of Theorem \ref{i/e}.}
We first prove that $P_{a,b}(X)$ is a polynomial and then consider powers of monomials with positive coefficients. 
\begin{lem}\label{lemi/e}
For $1<a<b$ and $\gcd(a,b) = 1$, there exists a unique $3\le m\le b$ such that $b$ divides $(m-1)a-1$. 
\end{lem}
\begin{proof}
Because $(a,b) = 1$, there exist $r$ and $s$ such that $sa+tb = 1$. All integral solutions of the equation $xa+yb = 1$ are of the form $(x,y) = (r+tb, s-ta)$ for some $t\in \mathbb{Z}$. Hence, there is a unique solution with $1\le x = r_0+t_0b\le b$. Set $m = r_0+t_0b+1$. By definition, $b$ divides $(m-1)a-1$. It remains to show $3\le m\le b$ or equivalently, $1<r_0+t_0b<b$. If $r_0+t_0b = 1$; then $b$ divides $a-1$, which contradicts $1<a<b$. So $r_0+t_0b>1$. If $r_0+t_0b = b$, then $b$ divides $1$, which contradicts $b>1$. So $r_0+t_0b<b$. This completes the proof. 
\end{proof}

\begin{proof}[Proof of Theorem \ref{i/e}]
We write
\begin{align*}
    P_{a,b}(X)\ =\ \frac{(X-1)(X^{a(b-1)}+X^{a(b-2)}+\cdots + 1)}{X^b-1}. 
\end{align*}
It suffices to prove that $f(X): = (X-1)(X^{a(b-1)}+X^{a(b-2)}+\cdots+1)$ can be written as $(X^b-1)g(X)$ for some polynomial $f(X)$. We have
\begin{align}\label{ie1}f(X) \ =\ (X^{ab-a+1}+X^{ab-2a+1}+\cdots + X)-(X^{ab-a}+X^{ab-2a}+\cdots+1).\end{align}
Let $3\le m\le b$ be chosen such that $b$ divides $(m-1)a-1$. By Lemma \ref{lemi/e}, $m$ exists and is unique. For each $1\le k\le m-1$, we have
\begin{align}\label{ie2} X^{ab-ka+1}-X^{ab-(b+k-m+1)a}\ =\ X^{a(m-k-1)}(X^{ab-((m-1)a-1)}-1),\end{align}
which is divisible by $X^b-1$. 
For each $m\le k\le b$, we have 
\begin{align}\label{ie3}
X^{ab-ka+1}-X^{ab-(k-m+1)a}\ =\ X^{ab-ka+1}(1-X^{(m-1)a-1}),
\end{align}
which is divisible by $X^b-1$. From \eqref{ie1}, \eqref{ie2}, and \eqref{ie3}, we know that $P_{a,b}(X)$ is a polynomial. Furthermore, letting $\ell : = ((m-1)a-1)/b \ge 1$. we can write
\begin{align}\label{ie4}
P_{a,b}(X)\ =\ \sum_{k=1}^{m-1} X^{a(m-k-1)}u(X)-\sum_{k=m}^{b} X^{ab-ka+1}v(X),
\end{align}
where $u(X) = X^{(a-\ell-1)b}+X^{(a-\ell-2)b}+\cdots+1$ and $v(X) = X^{(\ell-1)b}+X^{(\ell-2)b}+\cdots+1$.

Next, we prove that exponents of monomials with positive coefficients are in arithmetic progression if and only if $b\equiv 1\Mod a$. Forward implication: By \eqref{ie4}, the two largest powers with positive coefficients are $a(m-2)+(a-\ell-1)b$ and $a(m-3)+(a-\ell-1)b$. (Note that the two monomials having these powers are not cancelled.) Hence, we have an arithmetic progression of difference $a$. If $u(X)$ has exactly one summand or $a-\ell-1 = 0$, then $b\equiv 1\Mod a$. Assume that $a-\ell-1 > 0$. Because $\gcd(a,b) = 1$, it follows that $X^{a(m-2)+(a-\ell-2)b}$ must get cancelled. Then there exist $1\le j\le \ell$ and $m\le k\le b$ such that
\begin{align*}
    a(m-2)+(a-\ell-2)b \ =\ (ab-ka+1)+b(\ell-j).
\end{align*}
Replacing $b\ell = (m-1)a-1$ and simplifying, we arrive at $a(k-m)+1 = b(2-j)$, which gives $j = 1$. So $a(k-m) + 1 = b$ and thus, $b\equiv 1\Mod a$. Backward implication: straightforward calculations show $m = b-(b-1)/a+1$ and $\ell = a-1$. Hence, $u(X) = 1$ and we have
\begin{align*}
    P_{a,b}(X) \ =\ \sum_{k=1}^{m-1} X^{a(m-k-1)}-\sum_{k=m}^{b} X^{ab-ka+1}(X^{(\ell-1)b}+X^{(\ell-2)b}+\cdots+1).
\end{align*}
Since the power of each monomial in the second sum is $(1-2b)\Mod a$, no summand in the first sum gets cancelled. Therefore, the powers of monomials with positive coefficient are in an arithmetic progression. We have shown that exponents of monomials with positive coefficients are in arithmetic progression if and only if $b\equiv 1\Mod a$. As the proof for negative coefficients is similar, we omit it.
\end{proof}

\ \\
\end{document}